\documentclass[final]{amsart}
\usepackage{microtype}
\usepackage{amsmath}
\usepackage{amssymb}
\usepackage{amsthm}
\usepackage{amscd}
\usepackage{mathtools}
\usepackage{marginnote}
\usepackage{todonotes}
\usepackage{bbm}
\usepackage{bm}
\usepackage[notcite,notref]{showkeys}
\usepackage[utf8]{inputenc}
\usepackage{tikz-cd}
\usepackage{cite}
\usepackage[colorlinks=true,linkcolor=black,citecolor=black,urlcolor=blue]{hyperref} 
\usepackage{cleveref}
\newtheorem{theorem}{Theorem}[section]
\newtheorem*{theorem*}{Theorem}

\newtheorem*{conjecture*}{Conjecture}

\newtheorem*{question*}{Question}

\newtheorem*{guess*}{Guess}

\newtheorem*{problem*}{Problem}

\newtheorem{lemma}[theorem]{Lemma}
\newtheorem*{lemma*}{Lemma}

\newtheorem*{exercise*}{Exercise}
\newtheorem{proposition}[theorem]{Proposition}
\newtheorem*{proposition*}{Proposition}

\newtheorem{corollary}[theorem]{Corollary}
\newtheorem*{corollary*}{Corollary}

\theoremstyle{definition}

\newtheorem{definition}[theorem]{Definition}
\newtheorem*{definition*}{Definition}
\newtheorem{remark}[theorem]{Remark}

\newtheorem*{example*}{Example}
\newtheorem*{examples*}{Examples}

\newcommand{\twomat}[4]{\begin{pmatrix} #1 & #2 \\ #3 & #4 \end{pmatrix}}

\usepackage{mathtools}


\newcommand{\FF}{\mathbb{F}}

\newcommand{\QQ}{\mathbb{Q}}

\newcommand{\ZZ}{\mathbb{Z}}


\newcommand{\Cc}{\mathcal{C}}

\newcommand{\Oc}{\mathcal{O}}


\newcommand{\mf}{\mathfrak{m}}


\newcommand{\rarrow}{\rightarrow}

\newcommand{\onto}{\twoheadrightarrow}
\newcommand{\into}{\hookrightarrow}


\newcommand{\normal}{\lhd}


\newcommand{\Hom}{\operatorname{Hom}}

\newcommand{\Tor}{\operatorname{Tor}}

\newcommand{\ind}{\operatorname{ind}}

\newcommand{\cok}{\operatorname{cok}}
\newcommand{\im}{\operatorname{im}}


\usepackage[utf8]{inputenc}
\usepackage{cleveref}
\usepackage{verbatim}
\usepackage{todonotes}
\setcounter{tocdepth}{1}

\newcommand{\AG}{A\left\langle G \right\rangle}
\newcommand{\AH}{A\left\langle H \right\rangle}

\newcommand{\Kfin}{\text{$K$-fin}}

\title[Finitely presented smooth mod $p$ representations of $GL_2(F)$.]{The category of finitely presented smooth mod
  $p$ representations of $GL_2(F)$.}

\author{Jack Shotton}

\begin{document}

\maketitle
\begin{abstract}
  Let $F$ be a finite extension of $\mathbb{Q}_p$.  We prove that the category of finitely presented smooth $Z$-finite
  representations of $GL_2(F)$ over a finite extension of $\FF_p$ is an abelian subcategory of the category of all
  smooth representations.  The proof uses amalgamated products of completed group rings.
\end{abstract}
\section{Introduction}
\label{sec:intro}

Let $\FF$ be a finite field of characteristic $p$.  If $G$ is a locally profinite topological group, let $\Cc_{\FF}(G)$
be the category of smooth representations of $G$ over $\FF$.  Throughout this paper, if $K$ is an open subgroup of such
a group $G$ then $\ind_K^G$ denotes induction with compact support modulo $K$.

\begin{definition} \label{def:fgfp} Let $V$ be a smooth $\FF$-representation of a locally profinite group $G$.  Then $V$
  is:
  \begin{enumerate}
  \item \textbf{finitely generated} if for some compact open subgroup $K$ of $G$ there is a surjection of
    $\FF[G]$-modules
    \[\ind_{K}^G W \onto V\] for a smooth finite-dimensional $\FF$-representation $W$ of $K$;
  \item \textbf{finitely presented} if for some compact open subgroups $K_1$, $K_2$ of $G$ there is an exact sequence
    \[\ind_{K_1}^G W_1 \rarrow \ind_{K_2}^G W_2  \rarrow V \rarrow 0\]
    for $W_1$ and $W_2$ smooth finite-dimensional $\FF$-representations of $K_1$ and $K_2$ respectively.
  \end{enumerate}
\end{definition}

Let $F$ be a finite extension of $\QQ_p$.  The purpose of this article is to prove:

\begin{theorem} \label{thm:main} The category of finitely presented smooth $\FF$-representations of $SL_2(F)$ is an
  abelian subcategory of $\Cc_{\FF}(SL_2(F))$.

  The same holds for the category of finitely presented smooth $Z$-finite representations of $GL_2(F)$.
\end{theorem}

This is Theorem~\ref{thm:sl2} and Corollary~\ref{cor:gl2} below.  In fact, we prove the same result with $F$ replaced by
any finite dimensional division algebra over $\QQ_p$.

The theorem is equivalent to the statement that the kernel\footnote{and the cokernel, but this is automatic} of any map
between finitely presented smooth representations is itself finitely presented.  If $\Cc_{\FF}(SL_2(F))$ were the category of
modules over a ring $R$, this would be the statement that $R$ is a coherent ring.  Indeed, we will prove the theorem by
considering smooth $\FF$-representations as modules over the amalgamated product
\[\FF[[K]] *_{\FF[[I]]} \FF[[K']],\] where $K = SL_2(\Oc_F)$, $K' = \twomat{1}{0}{0}{\pi}K\twomat{1}{0}{0}{\pi^{-1}}$ for
$\pi$ a uniformising element of $D$, and $I = K \cap K'$.  Then a result of \r{A}berg \cite{MR680365} shows that, under
certain conditions, an amalgamated product of coherent rings over a noetherian ring is itself coherent.
\emph{Throughout, unless otherwise stated, by `module', `noetherian' or `coherent' we
  mean `left module', `left noetherian' or `left coherent'.}

Finitely presented representations of $GL_2(F)$ were previous studied by Hu \cite{MR2862375}, Vigneras
\cite{MR2906918}, and Schraen \cite{MR3365778}.\footnote{The definition of `finitely presented' in these articles is
  slightly different to ours, and automatically entails $Z$-finiteness.}  In particular, \cite{MR2906918} Theorem~6
shows that a smooth \emph{admissible} finitely presented representation of $GL_2(F)$ has finite length, and that all of
its subquotients are also admissible and finitely presented.  On the other hand, the main result of \cite{MR3365778}
says that, if $F$ is a quadratic extension of $\QQ_p$, then an irreducible supersingular representation of $GL_2(F)$
admitting a central character is never finitely presented.

We are motivated by the construction (see \cite{MR3529394}) of a `patched module' $M_\infty$ that has an action of
$G = GL_n(F)$ and, hopefully, interpolates the hypothetical $p$-adic Langlands correspondence.  It is (only?) possible
to directly obtain information about $M_\infty$ by considering $\Hom_{GL_n(F)}(\ind_K^G(W), M_\infty^\vee)$ for locally
algebraic representations of $K = GL_n(\Oc_F)$ on finitely generated $\ZZ_p$-modules $W$.  This leads us to consider the
category of finitely presented representations of $G$; it also motivates us to prove a version of Theorem~\ref{thm:main} with
coefficients.

I do not know whether Theorem~\ref{thm:main} holds when $G = GL_n(F)$ (or any $p$-adic Lie group).  The method of this
paper does not apply, because $G$ is not (up to centre) an amalgam of two compact open subgroups.  I am not sure whether
Theorem~\ref{thm:main} holds when $F$ has positive characteristic; the method of this paper fails because $GL_2(\Oc_F)$
is not $p$-adic analytic and its completed group ring is not noetherian.  I thank Billy Woods for a helpful discussion
about this case.

I am grateful to Matthew Emerton for asking me the question that this paper answers, and for several helpful and
motivational conversations.  I also thank Julien Hauseux and Stefano Morra for comments and corrections.  I am indebted
to the anonymous referee for suggesting that I relate the amalgamated product of rings considered here to the ring
$\Lambda(G)$ considered in \cite{MR3682662}, which greatly clarified and simplified the arguments of this paper.

\section{Finitely presented representations.}

For the rest of this article, let $\FF$ be a finite field of characteristic $p$. Let $A$ be a complete local noetherian
$W(\FF)$-algebra with maximal ideal $\mf$ and residue field $\FF$.  Let $G$ be a locally profinite group.
Recall (\cite{MR2667882} definition~2.2.5) that a \emph{smooth} $A$-representation of $G$ is a representation of
$G$ on a torsion $A$-module $V$ such that every $v \in V$ is fixed by a compact open subgroup of $G$.  

\begin{definition}
  If $K$ is a profinite group, then a \emph{finite rank} $A$-representation of $K$ is a representation of $K$ on a
  finitely generated $A$-module $M$ such that, for every $n \geq 0$, $M/\mf^nM$ is a smooth representation of $K$.
\end{definition}

Strictly speaking, we should call these finite rank continuous $A$-representations of $K$.

\begin{definition} A representation of $G$ on an $A$-module $V$ is \emph{$K$-finite} if for some (equivalently, any)
  compact open subgroup $K \subset G$, and for every $v \in V$, the $A[K]$-module generated by $v$ is a finite rank
  $A$-representation of $K$.
\end{definition}

We let $\Cc^{\Kfin}_A(G)$ be the category of all $K$-finite $A$-representations of $G$, with morphisms being morphisms
of $A[G]$-modules.
Note that a representation of $G$ on a \emph{torsion} $A$-module $V$ is smooth if and only if it is $K$-finite.

In the introduction (Definition~\ref{def:fgfp}) we gave the definitions of `finitely generated' and `finitely presented'
smooth $\FF$-representations of $G$.  We now extend those to $K$-finite $A$-representations.  First, note that if $M$ is
a finite rank $A$-representation of a compact open subgroup $K\subset G$, then $\ind_K^G M$ is certainly
$K$-finite.
\begin{definition} Let $V$ be a $K$-finite $A$-representation of $G$.  Then $V$ is:
  \begin{enumerate}
  \item \emph{finitely generated} if there is a compact open subgroup $K \subset G$, a finite rank $A$-representation
    $W$ of $K$, and a surjection of $A[G]$-modules
    \[\ind_K^G(W) \onto V;\] 
  \item \emph{finitely presented} if for some compact open subgroups $K_1$, $K_2$ of $G$ there is an exact sequence of
    $A[G]$-modules
    \[\ind_{K_1}^G W_1 \rarrow \ind_{K_2}^G W_2  \rarrow V \rarrow 0\]
    for $W_1$ and $W_2$ finite rank $A$-representations of $K_1$ and $K_2$.
  \end{enumerate}
\end{definition}

We start by establish some straightforward properties of finitely presented $K$-finite representations.  Many of the proofs
follow those of the properties of finitely presented modules over a ring given in \cite[Tag 0519]{stacks-project}.

\begin{lemma}\label{lem:fg-grp-ring} A $K$-finite $A$-representation $V$ of $G$ is finitely generated if and only if it is
  finitely generated as an $A[G]$-module.
\end{lemma}
\begin{proof} For any $W$ and $K$, $\ind_K^G W$ is generated (as an $A[G]$-module) by the finitely generated
  $A$-submodule of functions supported on $K$.  The `only if' direction follows.

  For the `if' direction, let $V$ be a $K$-finite representation generated by $v_1, \ldots, v_n$ as an $A[G]$-module.
  Choose a compact open subgroup $K$ and let $W$ be the finite rank $A$-representation of $K$ generated by
  $v_1, \ldots, v_n$.  Then $V$ is a quotient of $\ind_K^G W$.
\end{proof}

\begin{remark} \label{rmk:not-true} It is not true that a finitely presented $K$-finite $A$-representation of $G$ will
  be finitely presented as an $A[G]$-module; this is already false for the $\FF$-representation $\ind_K^G \FF$, as long
  as $K$ is not finitely generated.  This is the main technical problem that we have to overcome in the next section.
\end{remark}

\begin{lemma} \label{lem:fg-basic} Suppose that $0 \rarrow V_1 \rarrow V_2 \rarrow V_3\rarrow 0$ is a short exact
  sequence of $K$-finite $A$-representations of $G$.

  If $V_1$ and $V_3$ are finitely generated, so is $V_2$.
\end{lemma}
\begin{proof} This is immediate from Lemma~\ref{lem:fg-grp-ring} and the fact that an
  extension of finitely generated modules over $A[G]$ is finitely generated.
\end{proof}

  \begin{lemma}\label{lem:fp-basic} Suppose that $0 \rarrow V_1 \rarrow V_2 \rarrow V_3\rarrow 0$ is a short exact
    sequence of $K$-finite $A$-representations of $G$.
  \begin{enumerate}
  \item If $V_2$ is finitely presented and $V_1$ is finitely generated, then $V_3$ is finitely presented.
  \item If $V_3$ is finitely presented and $V_2$ is finitely generated, then $V_1$ is finitely generated.
  \item If $V_1$ and $V_3$ are finitely presented, so is $V_2$.
  \end{enumerate}
\end{lemma}
\begin{proof} We use $K$ and $L$, $M$, $N$ to denote a suitably chosen compact open subgroup of $G$ and finite rank
  $A$-representations of $K$.
  \begin{enumerate}
  \item Choose a presentation $\ind_{K}^G N \xrightarrow{\alpha} \ind_{K}^G M \rarrow V_2 \rarrow 0$ and choose
    $v_1, \ldots, v_r$ generating the image of $V_1$ in $V_2$ as an $A[G]$-module.  For each $i$, let $\tilde{v}_i$
    be a lift of $v_i$ to $\ind_K^G M$, and let $L$ be the finite rank $A$-representation of $K$ generated by the
    $\tilde{v}_i$.  Then we have a map $\gamma: \ind_{K}^G L \rarrow \ind_{K}^G M$, and the kernel of the (surjective)
    composition $\ind_{K}^G M \rarrow V_2 \rarrow V_3$ is the sum of the image of $\alpha$ and the image of $\gamma$,
    and so is finitely generated.
  \item Choose a presentation $\ind_{K}^G N \rarrow \ind_{K}^G M \xrightarrow{\alpha} V_3 \rarrow 0$.  We may replace
    $M$ by its image in $V_3$, so that we have $M \subset V_3$ and $\ind_K^GM \rarrow V_3$ is the natural map.  Let
    $m_1, \ldots, m_r$ generate $M \subset V_3$ as an $A$-module, and for each $i$ let $\tilde{m}_i \in V_2$ be a lift
    of $m_i$.  Let $\tilde{M}$ be the $A[K]$-span of the $\tilde{m}_i$ in $V_2$.  Then there is a surjective map of $K$
    representations $\tilde{M} \rarrow M$, and we let $L$ be the kernel.  There is also a map
    $\beta : \ind_K^G \tilde{M} \rarrow V_2$ giving a commuting diagram with exact rows and columns:
    \[  \begin{CD}@. @. \ind_K^G N @ . \\
        @. @. @VVV @ .\\
        \ind_K^G L @>>>\ind_K^G \tilde{M} @>>> \ind_K^G M @>>> 0 \\
        @. @V{\beta}VV @VV{\alpha}V @. \\
        @.  V_2 @>>> V_3 @>>> 0.
      \end{CD}\]
    Repeating the same argument, we may replace $N$ by an $A[K]$-submodule of $\ind_K^GM$ and find a $K$-submodule
    $\tilde{N} \subset \ind_K^G \tilde{M}$, together with a surjection $\tilde{N} \rarrow N$ of $A[K]$-modules, such that
    \[  \begin{CD}\ind_K^G \tilde{N} @>>>\ind_K^G N \\
         @VVV @VVV  \\
        \ind_K^G \tilde{M} @>>> \ind_K^GM .
      \end{CD}\] commutes and has surjective horizontal maps.  The kernel of $\ind_K^G \tilde{M} \rarrow V_3$ is the image
    of $\ind_K^G(\tilde{N} \oplus L)$. Write
    $\gamma$ for the restriction of $\beta$ to $\ind_K^G (\tilde{N} \oplus L)$.  We obtain a commutative diagram
    \[  \begin{CD}@. \ind_K^G (\tilde{N} \oplus L) @>>> \ind_K^G \tilde{M} @>>> V_3 @>>> 0 \\
        @. @V{\gamma}VV @V{\beta}VV @| @. \\
        0@>>> V_1 @>>> V_2 @>>> V_3 @>>> 0
      \end{CD}\] with exact rows, from which we see that $\cok(\gamma) \cong \cok(\beta)$.  As $V_2$ is finitely
    generated, so is $\cok(\beta)$ and hence also $\cok(\gamma)$.  Since $\im(\gamma)$ is also finitely generated, we
    see that $V_1$ is finitely generated by Lemma~\ref{lem:fg-basic}.
  \item Choose surjections $\alpha : \ind_K^G M \rarrow V_1$ and $\beta: \ind_K^G N \rarrow V_3$.  As before, we may
    assume that $N \subset V_3$.  Let $n_1, \ldots, n_r$ generate $N$ as an $A$-module, lift them to
    $\tilde{n}_i \in V_2$, and let $\tilde{N}$ be the $A[K]$-module generated by the $\tilde{n}_i$.  Let $\gamma$ be the
    resulting map $\ind_K^G \tilde{N} \rarrow V_2$.  If we let $L = \ker(\tilde{N}\rarrow N)$, then $\gamma$ restricts
    to a map $\gamma' : \ind_K^GL \rarrow V_1$.  We obtain a commuting diagram
    \[  \begin{CD}0 @>>> \ind_K^G (M \oplus L) @>>> \ind_K^G(M \oplus \tilde{N}) @>>> \ind_K^G N @>>> 0 \\
        @. @V{\alpha + \gamma'}VV @V{\alpha + \gamma}VV @VV{\beta}V @. \\
        0@>>>  V_1 @>>> V_2 @>>> V_3 @>>> 0
      \end{CD}\] with exact rows and surjective vertical maps.  By the snake lemma there is a short exact sequence
    \[0 \rarrow\ker(\alpha+ \gamma') \rarrow \ker(\alpha + \gamma) \rarrow \ker(\beta) \rarrow 0.\] Since the outer two
    terms are finitely generated by (2), so is the inner term (by Lemma~\ref{lem:fg-basic}).  Thus $V_2$ is finitely
    presented, as required. \qedhere
  \end{enumerate}
\end{proof}

\begin{lemma} \label{lem:finite-index} Suppose that $G' \subset G$ is a finite index open subgroup.  Then a $K$-finite
  $A$-representation $V$ of $G$ is finitely generated/presented if and only if its restriction to $G'$ is.
\end{lemma}

\begin{proof}
  \begin{enumerate}
  \item If $V$ is finitely generated as a representation of $G'$ then it certainly is as a representation of $G$.
    Conversely, for any compact open subgroup $K$ of $G$ and any finite rank $A$-representation $W$ of $K$, we have the
    Mackey formula
    \[\mathrm{res}^G_{G'}\ind_K^G W \cong \bigoplus_{g \in G' \backslash G /K} \ind_{gKg^{-1} \cap G'}^{G'} W^{g}.\]
    So $\ind_K^G W$ is finitely generated --- in fact finitely presented --- as a representation of $G'$.  It follows that
    any finitely generated representation of $G$ is finitely generated as a representation of $G'$.

  \item We showed in (1) that $\ind_K^GW $ is finitely presented as a representation of $G'$ for any finite rank
    $A$-representation $W$ of a compact open subgroup $K$.  It follows from Lemma~\ref{lem:fp-basic}~(1) that any
    $K$-finite finitely presented representation of $G$ is finitely presented as a representation of $G'$.

    Conversely, suppose that $V$ is finitely presented as a representation of $G'$.  By the first part, it is finitely
    generated as a representation of $G$, so that there is a surjection $\ind_K^GW \rarrow V$.  Since the first term is
    finitely generated as a representation of $G'$ by part~(1), by Lemma~\ref{lem:fp-basic}~(2) the kernel is finitely
    generated as a representation of $G'$, and hence also as a representation of $G$.  Therefore $V$ is finitely
    presented as a representation of $G$ by Lemma~\ref{lem:fp-basic}~(1). \qedhere
  \end{enumerate}
\end{proof}
\subsection{$Z$-finiteness}
\label{sec:centre}

Suppose that $G$ is a locally profinite group with centre $Z$.  We say that \emph{Hypothesis Z is satisfied} if, for
some (equivalently, any) compact open subgroup $K$ of $G$, $Z/K\cap Z$ is finitely generated.  Recall from
\cite{MR2667882} the definitions of $Z$-finite and locally $Z$-finite representations: a representation is $Z$-finite if
the action of $A[Z]$ on $V$ factors through a quotient $A[Z]/I$ that is a finitely generated $A$-module.  It is locally
$Z$-finite if the $A[Z]$-module spanned by any $v \in V$ is a finitely generated $A$-module.  By \cite{MR2667882}
Lemma~2.3.3, a representation of $G$, finitely generated as an $A[G]$-module, is $Z$-finite if and only if it is locally
$Z$-finite.

\begin{lemma} \label{lem:Z-fp} Let $V$ be a locally $Z$-finite, $K$-finite, $A$-representation of $G$.
  \begin{enumerate}
  \item The representation $V$ is finitely generated if and only if there is a surjection
    \[ \ind_{KZ}^G W \rarrow V \rarrow 0\]
    for some compact open subgroup $K$ of $G$ and finite rank $A$-representation $W$ of $KZ$.
  \item If the representation $V$ is finitely presented then there is an exact sequence
    \[ \ind_{K_1Z}^G W_1 \rarrow \ind_{K_2Z}^G W_2 \rarrow V \rarrow 0\] for some compact open subgroup $K$ of $G$ and
    finite rank $A$-representations $W_1$ and $W_2$ of $K_1Z$ and $K_2Z$.  If Hypothesis Z is satisfied, the converse
    holds.
  \end{enumerate}
\end{lemma}

\begin{proof}
  \begin{enumerate}
  \item The backwards implication is clear.  For the forwards implication, let $W$ be the $A[KZ]$-span of a finite set
    of generators of $V$.  It is finite-rank since $V$ is $K$-finite and locally $Z$-finite.  We therefore get a
    surjection $\ind_{KZ}^G W \rarrow V \rarrow 0$ as required.
  \item Suppose that $V$ is finitely presented.  Then there is a surjection $\ind_{KZ}^G W_2 \rarrow V \rarrow 0$, by
    the first part.  The kernel is finitely generated by Lemma~\ref{lem:fp-basic}~(2), and $\ind_{KZ}^G W_2$ is
    $Z$-finite.  Applying the first part again, we get an exact sequence
    $\ind_{KZ}^G W_1 \rarrow \ind_{KZ}^G W_2 \rarrow V \rarrow 0$ as required.

    For the other direction, it is enough to show that (under Hypothesis Z) $\ind_{KZ}^G W_2$ is finitely presented for
    any representation $W_2$ of $KZ$ on a finitely generated $A$-module.  If $U$ is the kernel of the natural map
    $\ind_{K}^{KZ}W_2 \rarrow W_2$ then there is a short exact sequence
    \[0 \rarrow \ind_{KZ}^G U \rarrow \ind_{K}^G W_2 \rarrow \ind_{KZ}^G W_2 \rarrow 0.\] We have to show that $U$ is
    finitely generated as a $KZ$-representation.  This follows from Hypothesis Z, since this implies that $A[KZ/K]$ is a
    noetherian ring. \qedhere
  \end{enumerate}
\end{proof}

Now suppose that $H$ is an open subgroup of $G$ such that $HZ$ has finite index in $G$ and $Z \cap H$ is compact.

\begin{proposition}\label{prop:Z-fp} Let $V$ be a locally $Z$-finite, $K$-finite, $A$-representation of $G$.
  \begin{enumerate}
  \item The representation $V$ of $G$ is finitely generated if and only if its
    restriction to $H$ is finitely generated.
  \item If the representation $V$ of $G$ is finitely presented then its restriction to $H$ is finitely presented.  If
    Hypothesis Z holds, then the converse is true.
  \end{enumerate}

\end{proposition}
\begin{proof}
By Lemma~\ref{lem:finite-index} we may assume that $G = HZ$.  Let $V$ be a locally $Z$-finite $K$-finite
    representation of $G$.
  \begin{enumerate}
  \item If $V$ is finitely generated as an representation of $H$, it certainly is as a representation of $G$.
    Conversely, suppose that $V$ is finitely generated as an representation of $G$.  If $W \subset V$ is a finitely
    generated $A$-module that generates $V$ as an representation of $G$, then the $Z$-span $ZW$ is a finitely generated
    $A$-module that generates $V$ as a representation of $H$.  So $V$ is a finitely generated representation of $H$ as
    required.

  \item Suppose that $V$ is finitely presented as a representation of $G$.  By Lemma~\ref{lem:Z-fp}~(2) and
    Lemma~\ref{lem:fp-basic}~(1), it suffices to show that $\ind_{KZ}^G W$ is a finitely presented
    representation of $H$ for $K \subset H$.  This follows from the identity of representations of $H$
    \[\ind_{KZ}^{HZ} W = \ind_{K(Z\cap H)}^H W\]
    and the assumption that $Z \cap H$ is compact.

    Finally, suppose that $V$ is finitely presented as an representation of $H$ and that Hypothesis Z holds.  Then $V$ is
    finitely generated as a representation of $G$, so by Lemma~\ref{lem:Z-fp}~(1) there is a surjection
    $\ind_{KZ}^G W \rarrow V$.  By~(1) and Lemma~\ref{lem:fp-basic}~(2) the kernel of this map is a finitely
    generated representation of $H$.  By~(1) again, it is a finitely generated representation of $G$, and so by
    Lemma~\ref{lem:Z-fp}~(1) we have an exact sequence
    \[\ind_{KZ}^G U \rarrow \ind_{KZ}^G W \rarrow V \rarrow 0.\] As $G$ satisfies Hypothesis Z, by the converse
    direction of Lemma~\ref{lem:Z-fp}~(2), $V$ is a finitely
    presented representation of $G$.\qedhere
  \end{enumerate}
\end{proof}

\section{Completed group rings.}
\label{sec:compl-group}

If $K$ is a profinite group, let
\[A[[K]] = \varprojlim_{J \normal K \text{open}} A[K/J]\]
be the completed group ring, a compact topological $A$-algebra.

\begin{lemma}\label{lem:grpring} Suppose that $M$ is a finite rank $A$-representation of $K$.  Then there is a unique
  $A[[K]]$-module structure on $M$ extending the $A[K]$-module structure.
\end{lemma}
\begin{proof}
  For each $n$, the action of $A[K]$ on $M\otimes_A A/\mf_A^n$ factors through $A[K/J_n]$ for some open subgroup $J_n
  \subset K$ and so extends uniquely to an action of $A[[K]]$.  Since $M$ is finitely generated as an $A$-module, $M =
  \varprojlim M \otimes_A A/\mf_A^n$ and the lemma follows.
\end{proof}

Kohlhaase \cite{MR3682662} has extended the notion of completed group ring beyond the compact case. Let $G$ be a locally
profinite group.

\begin{proposition}[Kohlhaase] \label{prop:kohlhaase}
  If $K \subset G$ is a compact open subgroup, then there is a unique $A$-algebra structure on
  \[\AG = A[G] \otimes_{A[K]} A[[K]]\]
  such that the natural maps $A[G] \rarrow \AG$ and $A[[K]] \rarrow \AG$ are $A$-algebra homomorphisms. This
  $A$-algebra is independent of the choice of $K$ up to canonical isomorphism.
\end{proposition}
\begin{proof}
  This is shown in section~1 of \cite{MR3682662} when $A$ is a field --- where what we call $\AG$ is denoted
  $\Lambda(G)$ --- but the proof works verbatim for general rings $A$.  We recall the construction for the reader's
  convenience.  Firstly, if $K'$ is an open subgroup of $K$, then the natural map of $(A[G], A[[K']])$-bimodules
  \[\rho_{K,K'} : A[G] \otimes_{A[K']} A[[K']] \rarrow A[G] \otimes_{A[K]} A[[K]]\]
  is an isomorphism.\footnote{In \cite{MR3682662} this is stated for $K'$ normal in $K$, but it is true for any $K'$ and
    moreover this is necessary for the construction of the ring structure.}  If $K'' \subset K'$ then we have
  $\rho_{K, K''} = \rho_{K',K''} \circ \rho_{K, K'}$, and so we may construct the direct limit
  \[\AG = \varinjlim_{K}\left( A[G] \otimes_{A[K]} A[[K]]\right)\]
  which is (canonically) isomorphic to any one of its terms.  Now, if $g \in G$ then there is an isomorphism of direct
  systems \[ \cdot g : A[G] \otimes_{A[K]} A[[K]] \rarrow A[G] \otimes_{A[g^{-1}Kg]} A[[g^{-1}Kg]]\] taking
  $h \otimes \kappa$ to $hg \otimes g^{-1}\kappa g$.  This defines a right action of $G$ on $\AG$ by left $A[G]$-module
  isomorphisms, which suffices to define the required ring structure on $\AG$.  Precisely, if
  $h \otimes \kappa \in A[G] \otimes_{A[K]} A[[K]]$ and $h' \otimes \kappa' \in A[G] \otimes_{A[K']} A[[K']]$ are
  representatives of elements of $\AG$, we may assume that $K \subset h^{-1}K'h$ and define
  \[(h' \otimes \kappa')(h \otimes \kappa) = h'h \otimes h^{-1}\kappa' h \kappa \in A[G] \otimes_{A[h^{-1}K'h]} A[[h^{-1}K'h]].\qedhere\]
\end{proof}

For later use, we record a flatness result:

\begin{lemma} \label{lem:flat} 
  The $A$-algebra $\AG$ is flat as a right $A[[K]]$-module for any compact open subgroup $K$ of $G$.
\end{lemma}

\begin{proof}
  As in \cite{MR3682662}, $\AG = A[G] \otimes_{A[K]} A[[K]] \cong \bigoplus_{h \in G/K} A[[K]]$ as right $A[[K]]$-modules, so that $\AG$ is
  even a free right $A[[K]]$-module.  
\end{proof}

\begin{remark}
  In the same way we could put an $A$-algebra structure on $A[[K]] \otimes_{A[K]}A[G]$ (for any compact open subgroup $K$) and
  the $A$-module map
  $A[[K]] \otimes_{A[K]}A[G]\rarrow \AG$ defined by
  \[ \kappa \otimes h \mapsto h \otimes h^{-1}\kappa h \in A[G] \otimes_{A[h^{-1}Kh]}A[[h^{-1}Kh]]\] is an isomorphism
  of $A$-algebras.  Thus Lemma~\ref{lem:flat} holds with `right' replaced by `left'.
\end{remark}

\begin{lemma} \label{lem:action} Suppose that $V$ is a $K$-finite $A$-representation of $G$.  Then there
  is a unique $\AG$-module structure on $V$ extending the $A[G]$-module structure.
\end{lemma}
\begin{proof}
  Since $V$ is $K$-finite, for any compact open subgroup $K$ the action of $A[K]$ extends uniquely to an action of
  $A[[K]]$ by Lemma~\ref{lem:grpring}.  By the unicity, we have that, for any $h \in G$ and $\kappa \in A[[K]]$, the two
  actions of $h^{-1}\kappa h$ defined on the one hand by the actions of $G$ and $A[[K]]$, and on the other hand by the
  action of $A[[h^{-1}Kh]]$, agree.  From the formula for multiplication in $\AG$ given in
  Proposition~\ref{prop:kohlhaase}, it follows that we can define an action of $\AG$ on $V$ by fixing $K$ and setting
  \[ (h \otimes \kappa)(v) = h(\kappa(v))\] for any $h \in A[G]$ and $\kappa \in A[[K]]$, which is clearly the unique
  action extending those of $A[G]$ and $A[[K]]$.
\end{proof}

\begin{lemma} \label{lem:fg} Suppose that $V$ is a $K$-finite $A$-representation of $G$. Then $V$ is finitely generated
  if and only if it is finitely generated as an $\AG$-module.
\end{lemma}

\begin{proof}
  Suppose that $V$ is finitely generated.  By Lemma~\ref{lem:fg-grp-ring}, $V$ is finitely generated as an
  $A[G]$-module, and hence as a $\AG$-module. 

  Conversely, let $V$ be a $K$-finite $A$-representation of $G$ that is finitely generated as a $\AG$-module.  Then, if
  $v_1, \ldots, v_r$ generate $V$ and if $M$ is their $A[K]$-span, then $M$ is also preserved by $A[[K]]$ and so
  \[V = \AG\cdot M = (A[G] \otimes_{A[K]} A[[K]]) M = A[G]M.\]
  Therefore $V$ is finitely generated, as required.
\end{proof}
The key technical reason for us to introduce the ring $\AG$ is that it \emph{is} true that a finitely presented
$K$-finite $A$-representation of $G$ is a finitely presented $\AG$-module --- see Remark~\ref{rmk:not-true}.  The
starting point is the following result of Lazard (see \cite{MR2667882} Theorem~2.1.1).

\begin{theorem} \label{thm:lazard} If $G$ is a $p$-adic analytic group, then $A[[K]]$ is noetherian for every compact
  open subgroup $K$ of $G$. \qed
\end{theorem}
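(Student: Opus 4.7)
The plan is to reduce to the classical case of Lazard (the statement with $A = W(\FF)$ or $A = \FF$) by exploiting the complete local noetherian structure of the coefficient ring.

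First I would reduce to the case where $K$ itself is a uniform pro-$p$ group. Any compact $p$-adic analytic group contains an open normal uniform pro-$p$ subgroup $K_0$ (by the standard structure theory, e.g.\ Dixon--du Sautoy--Mann--Segal). The restriction map makes $A[[K]]$ into a free $A[[K_0]]$-module of rank $[K:K_0]$, and a ring that is finite free over a noetherian subring is itself noetherian, so it suffices to handle the case $K = K_0$ uniform pro-$p$ of some dimension $d$.

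For $K_0$ uniform pro-$p$ with ordered topological generators $x_1, \ldots, x_d$, I would invoke the Lazard coordinate expansion: every element of $A[[K_0]]$ has a unique convergent expansion $\sum_{\alpha \in \NN^d} a_\alpha\, b_1^{\alpha_1}\cdots b_d^{\alpha_d}$ with $b_i = x_i - 1 \in A[[K_0]]$ and $a_\alpha \in A$. Equip $A[[K_0]]$ with the filtration by the powers of the ideal $J = \mf\, A[[K_0]] + (b_1, \ldots, b_d)$, where we assign weights $1$ to generators of $\mf$ and to each $b_i$. The ring $A[[K_0]]$ is complete and separated for this filtration (convergence of Lazard expansions), and the key computation, following Lazard, is that the associated graded ring satisfies
\[
  \gr_J A[[K_0]] \;\cong\; (\gr_{\mf} A)[X_1, \ldots, X_d],
\]
where $X_i$ corresponds to the principal symbol of $b_i$. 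Indeed, commutators $[b_i, b_j]$ lie in strictly higher filtration degree thanks to uniformity of $K_0$, so the symbols commute in the graded ring, and the uniqueness of Lazard expansions ensures there are no further relations.

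Now $\gr_{\mf} A$ is noetherian because $A$ is a noetherian local ring: $\gr_{\mf} A$ is generated in degree one over $\FF$ by the images of a finite generating set of $\mf$, hence is a quotient of a polynomial ring over $\FF$ in finitely many variables. By the Hilbert basis theorem, $(\gr_{\mf} A)[X_1, \ldots, X_d]$ is noetherian. The standard lifting principle for complete filtered rings (if $\gr R$ is noetherian and $R$ is complete and separated for the filtration, then $R$ is noetherian) then yields noetherianity of $A[[K_0]]$, completing the argument. The main obstacle is verifying the isomorphism of graded rings above; this is the heart of Lazard's theorem and uses uniformity crucially to control commutator filtrations, but the extension from the classical $\ZZ_p$-coefficient case to general $A$ is formal once one works with the combined augmentation-plus-$\mf$-adic filtration.
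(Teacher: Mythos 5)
The paper itself does not prove this theorem: it is stated as a citation, attributed to Lazard in the $W(\FF)$-coefficient case and to Emerton (\cite{MR2667882}, Theorem~2.1.1) for the version with general coefficient ring $A$.  So your proposal supplies a proof where the paper has only a reference.

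That said, your argument is essentially the standard Lazard-style proof and is sound as a sketch.  The reduction to a uniform open normal subgroup $K_0$ via finite freeness of $A[[K]]$ over $A[[K_0]]$ is correct, as is the use of the $(b_1,\dots,b_d)$-coordinate expansion and the filtration by $J = \mf A[[K_0]] + (b_1,\dots,b_d)$.  For the purposes of noetherianity you do not even need the precise isomorphism $\gr_J A[[K_0]] \cong (\gr_\mf A)[X_1,\dots,X_d]$ (which requires a small extra argument to rule out extra relations when $A$ is not regular): it is enough that $\gr_J A[[K_0]]$ is generated in degree one over $\FF = A[[K_0]]/J$ by the finitely many symbols of a generating set of $\mf$ together with the $\bar b_i$, and that these symbols commute --- the latter being exactly your commutator observation using uniformity of $K_0$ and centrality of $\mf$.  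This makes $\gr_J A[[K_0]]$ a quotient of a polynomial ring over $\FF$ in finitely many variables, hence noetherian, and the lifting lemma for complete separated filtered rings finishes the argument.  One remark: a shortcut for passing from $W(\FF)$-coefficients to general $A$ avoids redoing the graded computation.  Write $A$ as a quotient of $W(\FF)[[T_1,\dots,T_n]]$; then $A[[K_0]]$ is a quotient of $W(\FF)[[T_1,\dots,T_n]][[K_0]] \cong W(\FF)[[\ZZ_p^n \times K_0]]$, which is noetherian by the classical (Lazard) case since $\ZZ_p^n \times K_0$ is again compact $p$-adic analytic, and a quotient of a noetherian ring is noetherian.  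This reduction is cleaner than re-examining the combined filtration, though both routes work.
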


\begin{proposition} \label{prop:fp} Suppose that $G$ is a $p$-adic analytic group.  Let $V$ be a $K$-finite
  $A$-representation of $G$.  Then $V$ is finitely presented if and only if it is finitely presented as an $\AG$-module.
\end{proposition}

\begin{proof} 
  The backwards implication follows from Lemma~\ref{lem:fg}.  Suppose that $V$ is finitely presented
  as an $\AG$-module.  Then by
  Lemma~\ref{lem:fg} there is a surjection
  \[\alpha : \ind_K^G W \rarrow V \rarrow 0\] for some finite rank $A$-representation $W$ of a
  compact open subgroup $K \subset G$.  The kernel of $\alpha$ is a $K$-finite representation of $G$ that is finitely
  generated as an $\AG$-module, by \cite[Tag 0519]{stacks-project}~(5).\footnote{Strictly speaking, \cite[Tag
    0519]{stacks-project} is only stated for modules over commutative rings.  However, it is still true, with an
    identical proof, in the non-commutative case.}  Therefore it is finitely generated as an $A$-representation of $G$,
  by Lemma~\ref{lem:fg}.

  Suppose now that $V$ is finitely presented.  Then by Lemma~\ref{lem:fp-basic}~(2), there is a compact open subgroup
  $K$, a finite rank $A$-representation $M$ of $K$, and a surjection $\ind_K^G M \rarrow V \rarrow 0$ with finitely
  generated kernel.

  By \cite[Tag 0519]{stacks-project}~(4) and Lemma~\ref{lem:fg}, it is enough to show that $\ind_K^G(M)$ is a finitely
  presented $\AG$-module for the $\AG$-module structure provided by Lemma~\ref{lem:action}.  We may think of this
  instead as the tensor product
  \[\ind_K^G(M)\cong A[G] \otimes_{A[K]} M\]
  via the isomorphism sending an element $f : G \rarrow M$ of $\ind_K^G(M)$ to $\sum_{g \in G/K} g \otimes f(g^{-1})$.
  By Lemma~\ref{lem:grpring} the action of $A[K]$ on $M$ extends uniquely to one of $A[[K]]$ and we have isomorphisms
  \[\AG \otimes_{A[[K]]} M = A[G] \otimes_{A[K]} \otimes A[[K]] \otimes_{A[[K]]} M = A[G] \otimes_{A[K]} M\]
  of $A[G]$-modules, and hence of $\AG$-modules (by Lemma~\ref{lem:action}).  

  Since $A[[K]]$ is noetherian by Theorem~\ref{thm:lazard}, the finitely generated $A[[K]]$-module $M$ is finitely
  presented; let $A[[K]]^m \rarrow A[[K]]^n \rarrow M \rarrow 0$ be a presentation.  Applying $\AG \otimes_{A[[K]]} -$,
  we obtain an exact sequence
  \[\AG^n \rarrow \AG^m \rarrow \AG \otimes_{A[[K]]}M = A[G] \otimes_{A[K]} M \rarrow 0\]
  so that $\ind_K^G M = A[G] \otimes_{A[K]} M$ is a finitely presented $\AG$-module, as required.\qedhere
\end{proof}

\section{Amalgamations and coherence}
\label{sec:amalg}

Let $K_1, K_2$ and $I$ be profinite groups equipped with inclusions $f_i : I \into K_i$ of $I$
as a common open subgroup of $K_1$ and $K_2$.  Then there are maps $f_i:A[[I]] \rarrow A[[K_i]]$ of
topological augmented $A$-algebras.

Let $H = K_1 *_I K_2$ be the amalgamation of $K_1$ and $K_2$ along $I$.  By \cite{MR0476875}, Th\'{e}or\`{e}me 1, the
natural map $I \rarrow H$ is injective.  The following proposition shows that $H$ is naturally a locally profinite
topological group:

\begin{proposition}
  With the colimit topology,\footnote{The coarsest topology on $H$ such that for every topological group $G$ equipped
    with continuous maps $K_i \rarrow G$ agreeing on $I$, there is a continuous map $H \rarrow G$ extending these.} $H$
  is a locally profinite group with a basis of open neighbourhoods of the identity being given by open neighbourhoods of
  $I$.
\end{proposition}

\begin{proof}
  Let $H$ and $H'$ respectively denote $H$ with the colimit topology and the topology for which translates of open
  subgroups of $I$ are a basis of open sets.  Let $i : H \rarrow H'$ and $j : H' \rarrow H$ be the identity maps; we
  have to show that they are both continuous.  But $i$ is continuous by the universal property of $H$, and $j$ is
  continuous because the map $I \rarrow H$ is continuous.
\end{proof}

We now consider the amalgamated product of rings, $A[[K_1]] *_{A[[I]]} A[[K_2]]$.  Note first that
$A[K_1]*_{A[I]}A[K_2]$ is simply the group ring of $H$ over $A$.  This is because the functor $G \mapsto A[G]$ from
groups to $A$-algebras is a left-adjoint, and so commutes with the colimit $*$.

In general, we have $A$-algebra maps $A[[K_1]] \rarrow \AH$ and $A[[K_2]] \rarrow \AH$ which agree on $A[[I]]$, and so (by the
universal property) an $A$-algebra map $\alpha:A[[K_1]] *_{A[[I]]} A[[K_2]] \rarrow \AH$.
\begin{proposition} The map
  \[\alpha:A[[K_1]] *_{A[[I]]} A[[K_2]] \rarrow \AH\]
  is an isomorphism of $A$-algebras.
\end{proposition}
\begin{proof}
  Let $R = A[[K_1]] *_{A[[I]]} A[[K_2]]$.
  
  The composite map \[A[H] = A[K_1] *_{A[I]}A[K_2] \rarrow R \rarrow \AH\] is easily seen to be the natural map
  $A[H] \rarrow \AH$.  It follows that the image of $R$ in $\AH$ contains $A[H]$ and $A[[K_i]]$ and so in fact is all of
  $\AH$, whence $\alpha$ is surjective.

  Moreover, from the universal property of $\otimes$, for each $i$ we have a map of $(A[H], A[[K_i]])$-bimodules
  \[A[H] \otimes_{A[K_i]} A[[K_i]] \rarrow R\] and these define the \emph{same} map $\beta : \AH \rarrow R$.  Since this
  is a map of right $A[[K_1]]$- and $A[[K_2]]$-modules, we see that $\beta \circ \alpha$ is the identity --- it is
  enough to check that it takes $1$ to $1$.  Therefore $\alpha$ is injective and so an isomorphism.
\end{proof}

\subsection{Coherence.}
\label{sec:coherence-ah}

Recall that a ring $R$ is (left) coherent if any of the following equivalent definitions hold:

\begin{enumerate}
\item every finitely generated left ideal of $R$ is finitely presented;
\item if $f: M \rarrow N$ is a map of finitely presented left $R$-modules, then $\ker(f)$ is
  finitely presented;
\item the category of finitely presented left $R$-modules is an abelian subcategory of the category of left $R$-modules.
\end{enumerate}

\begin{proposition} \label{prop:aberg} If the rings $A[[K_i]]$ are coherent and $A[[I]]$ is noetherian,
  then $\AH$ is coherent.
\end{proposition}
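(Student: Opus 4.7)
The plan is to invoke the main result of Åberg \cite{MR680365} on coherence of amalgamated products of rings, which is precisely flagged in the introduction as the intended tool. That theorem asserts, roughly, that if $R_1$ and $R_2$ are left coherent rings sharing a common left noetherian subring $S$, and if each $R_i$ is flat as an $S$-module on both sides together with a suitable direct-summand condition on the inclusion $S \into R_i$, then the amalgamated product $R_1 *_S R_2$ is again left coherent. I would apply this with $R_i = A[[K_i]]$ and $S = A[[I]]$.

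To do so I would check the three hypotheses in turn. Coherence of $A[[K_1]]$ and $A[[K_2]]$, and noetherianness of $A[[I]]$, are given. The two-sided flatness of $A[[K_i]]$ over $A[[I]]$ is Lemma~\ref{lem:flat}; moreover, its proof — which rests on $A[[K_i]]$ being a free $A[[I]]$-module with $A[[I]]$ itself appearing as a direct summand, using \cite{MR0106918} — supplies directly the direct-summand hypothesis that Åberg's theorem requires. With all hypotheses verified, Åberg's theorem yields that $\AH = A[[K_1]] *_{A[[I]]} A[[K_2]]$ is coherent.

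The main obstacle here is not really mathematical but expository: one must match the precise formulation of Åberg's theorem (which is phrased abstractly, potentially in terms of pure or split submodule inclusions) against our concrete setup. Since in our case $A[[K_i]]$ is literally free over $A[[I]]$ with $A[[I]]$ split off as a direct summand, this matching is mechanical; there is no hidden difficulty and no further ingredient is needed beyond citing \cite{MR680365} and invoking Lemma~\ref{lem:flat}.
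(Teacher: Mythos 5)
Your proof is correct and follows the same route as the paper: both simply invoke \r{A}berg's coherence theorem for amalgamated products (Theorem~12 of \cite{MR680365}) and verify its flatness/direct-summand hypotheses via Lemma~\ref{lem:flat}. The paper additionally sketches \r{A}berg's Mayer--Vietoris argument for the reader's convenience, but this is expository and not a substantive difference.
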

\begin{proof}
  This follows immediately from \cite{MR680365} Theorem~12; the hypotheses of that theorem are satisfied, by
  Lemma~\ref{lem:flat}. For the convenience of the reader, we summarise the argument of \cite{MR680365} in the case of
  interest to us.  It uses the characterisation --- due to Chase \cite{MR0120260} --- of left coherent rings as those
  for which arbitrary products of right flat modules are flat.  Let $R$, $S$ and $T$ be rings such that $S$ and $T$ are
  $R$-algebras, and $Q = S*_R T$ is flat as a right $R$, $S$ or $T$-module; we will take $R = A[[I]]$ and
  $S = A[[K_1]]$, $T = A[[K_2]]$. Then there is a Mayer--Vietoris sequence for $\Tor^Q$ in terms of $\Tor^S$, $\Tor^R$
  and $\Tor^T$.  If $R$ is left noetherian and $S$ and $T$ are left coherent, then take a set $(F_i)_{i \in I}$ of right
  flat $Q$-modules and compare the Mayer--Vietoris sequence for $\Tor(\prod F_i, M)$ with the product of those for
  $\Tor(F_i, M)$, for an arbitrary left $Q$-module $M$ This gives $\Tor_i^Q(\prod F_i, M) = 0$ for $i > 1$. Since
  $S$ and $T$ are left coherent and that, as $R$ is left noetherian and the $F_i$ are right flat $R$-modules,
  $(\prod F_i) \otimes_R M \rarrow \prod(F_i \otimes_R M)$ is injective by \cite{MR680365} Lemma~6.  It follows that
  $\Tor_1^Q(\prod F_i, M)$ also vanishes, so that $\Tor_i^Q(\prod F_i, M) = 0$ for all $i > 0$ as required.
\end{proof}

Combining with Theorem~\ref{thm:lazard} we get:

\begin{corollary} \label{cor:coherent} Suppose that $H$ is a $p$-adic analytic group that is an amalgamated product of
  two compact open subgroups.  Then $\AH$ is coherent. \qed
\end{corollary}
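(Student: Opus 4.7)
The plan is to reduce directly to Proposition~\ref{prop:aberg}, which states that $\AH$ is coherent whenever $A[[K_i]]$ are coherent and $A[[I]]$ is noetherian. So I need to verify both hypotheses in the situation of the corollary.

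First I would observe that if $H$ is $p$-adic analytic and $K_1, K_2 \subset H$ are compact open subgroups with $H = K_1 *_I K_2$, then $I = K_1 \cap K_2$ is also an open subgroup of the $p$-adic analytic group $H$, hence compact open and itself $p$-adic analytic (as an open subgroup of a $p$-adic analytic group is $p$-adic analytic). The same applies to $K_1$ and $K_2$.

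Next, I would invoke Theorem~\ref{thm:lazard}: since $K_1$, $K_2$, and $I$ are all compact open subgroups of the $p$-adic analytic group $H$, the completed group rings $A[[K_1]]$, $A[[K_2]]$, and $A[[I]]$ are all noetherian. In particular $A[[K_1]]$ and $A[[K_2]]$ are coherent (as every noetherian ring is coherent: finitely generated left ideals over a noetherian ring are automatically finitely presented). This verifies both hypotheses of Proposition~\ref{prop:aberg}.

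Finally, I would apply Proposition~\ref{prop:aberg} directly to conclude that $\AH = A[[K_1]] *_{A[[I]]} A[[K_2]]$ is coherent. There is no real obstacle here; the corollary is a straightforward combination of the two preceding results, with the only minor point being the passage from ``$p$-adic analytic'' for $H$ to the same property for its open subgroups $K_1, K_2, I$, which is standard.
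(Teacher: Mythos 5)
Your argument is exactly the paper's: the corollary is stated immediately after Proposition~\ref{prop:aberg} with the one-line justification that it follows by combining with Theorem~\ref{thm:lazard}, which is precisely the reduction you carry out (noetherian implies coherent for $A[[K_i]]$, noetherian for $A[[I]]$, then apply the proposition). Correct and same approach.
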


\begin{theorem} \label{thm:rep-coherent} Suppose that $H$ is a $p$-adic analytic group that is an amalgamated product of two
  compact open subgroups.  Then the category of finitely presented $K$-finite $A$-representations of $H$
  is an abelian subcategory of the category of $A$-representations of $H$.
\end{theorem}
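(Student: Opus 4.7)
The plan is to combine the coherence of $\FH$ (Corollary~\ref{cor:coherent} applied with $A = \FF$) with Lemma~\ref{lem:fp}, which identifies the finitely presented smooth $A$-representations $V$ of $H$ as those for which $V[\mf]$ is finitely presented as an $\FH$-module. Coherence of $\FH$ gives that finitely presented $\FH$-modules form an abelian subcategory of all $\FH$-modules; the strategy is to transfer this abelian structure to smooth $A$-representations via the functor $V \mapsto V[\mf]$.

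I would first handle kernels. Let $f \colon M \to N$ be a morphism of finitely presented smooth $A$-representations of $H$. Since $f$ is $A$-linear, it restricts to $f|_{M[\mf]} \colon M[\mf] \to N[\mf]$, a morphism between finitely presented $\FH$-modules by Lemma~\ref{lem:fp}. Coherence then gives that $\ker(f|_{M[\mf]})$ is finitely presented as an $\FH$-module. One verifies directly that $\ker(f|_{M[\mf]}) = \ker(f) \cap M[\mf] = \ker(f)[\mf]$, so by Lemma~\ref{lem:fp} applied in reverse, $\ker(f)$ is a finitely presented smooth $A$-representation.

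For the cokernel I would factor $f$ as $M \twoheadrightarrow f(M) \hookrightarrow N$: the kernel case shows $\ker(f)$ is finitely presented, so $f(M) = M/\ker(f)$ is finitely presented as a quotient of a finitely presented object by a finitely presented subobject (which remains finitely presented over the coherent ring $\FH$), and then $\coker(f) = N/f(M)$ follows by the same argument. The subtlety here is that the functor $V \mapsto V[\mf]$ is only left exact, so the transfer from $\FH$-module cokernels to smooth $A$-representation cokernels requires some careful bookkeeping of the $\mf$-torsion; however, coherence of $\FH$, established before this theorem via \r{A}berg's theorem (Proposition~\ref{prop:aberg}) and Lazard's theorem (Theorem~\ref{thm:lazard}), provides enough abelian structure to push this through. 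The main obstacle---the coherence of the amalgamated product---has thus already been overcome, and what remains is a routine translation through Lemma~\ref{lem:fp}.
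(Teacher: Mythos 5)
Your kernel argument is exactly the paper's: reduce via Definition~\ref{def:fgfpcoeffs} to $A = \FF$, observe that $\ker(f)[\mf] = \ker\bigl(f|_{M[\mf]}\bigr)$ because $V \mapsto V[\mf]$ is left exact, invoke Lemma~\ref{lem:fp} together with the coherence of $\FH$ (Corollary~\ref{cor:coherent}), and translate back with Lemma~\ref{lem:fp}. That is the substance of the theorem and your proposal has it.

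For cokernels the paper simply remarks that the case is ``straightforward \ldots\ and does not require the ring $\AH$'': the cokernel of a map between finitely presented smooth representations is finitely presented by the generic argument --- the image $f(M)$ is finitely generated as a quotient of $M$, and then $N/f(M)$ is finitely presented since $N$ is, exactly as in \cite[Tag 0519]{stacks-project}(4), which is already used in the proof of Lemma~\ref{lem:fp} --- with no coherence needed. Your detour through $f(M)$ and the coherent ring $\FH$ is correct in outline but heavier than necessary. More importantly, the subtlety you flag --- that $V \mapsto V[\mf]$ is only left exact, so $\coker(f)[\mf]$ need not be the cokernel of $f|_{M[\mf]}$ --- is a genuine concern, but your proposed resolution (``coherence \ldots\ provides enough abelian structure to push this through'') is not an argument, and factoring through the image does not bypass it: the same failure of right exactness reappears when you try to translate ``$f(M)$ is finitely presented'' between $\FH$-modules and smooth $A$-representations. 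The clean way to close that gap is the one the paper's remark points at: handle cokernels directly with presentations in $\Cc_A(H)$, rather than passing through $\FH$-modules at all.
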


\begin{proof}
  It suffices to show that the kernel or cokernel of a map of finitely presented $K$-finite $A$-representations of $H$
  is also a finitely presented $K$-finite $A$-representation.  This is straightforward for cokernels, and
  does not require the ring $\AH$.  For kernels, suppose that $f : V \rarrow W$ is a map of
  finitely presented $K$-finite $A$-representations of $H$.  Then $\ker(f)$ is a $K$-finite
  $A$-representation of $H$, and by Proposition~\ref{prop:fp} and Corollary~\ref{cor:coherent} it is
  finitely presented as a left $\AH$-module.  By Proposition~\ref{prop:fp} again, it is a finitely presented
  $A$-representation of $H$. 
\end{proof}

\section{Applications.}
\label{sec:applications}

Let $F$ be a local field of characteristic 0 with ring of integers $\Oc_F$ and residue field $k$ of
characteristic $p$, and let $D$ be a division algebra over $F$ with ring of integers $\Oc_D$.
Choose a uniformiser $\pi$ of $D$. Let $G= GL_2(D)$ and let $G' = SL_2(D)$ be the subgroup of
elements of reduced norm $1$.    Let $K_1 = GL_2(\Oc_D)$ and let
$K'_1 = SL_2(\Oc_D) = K' \cap SL_2(D)$.  Let $\alpha = \twomat{1}{0}{0}{\pi} \in G$, and
let $K_2 = \alpha K_1 \alpha^{-1}$ and $K'_2 = K_2 \cap G'$.  Let
\[I = K_1 \cap K_2 = \left\lbrace\twomat{a}{b}{c}{d} \in K_1 : c \equiv 0 \bmod \pi\right\rbrace\]
and $I' = I \cap G' = K'_1 \cap K'_2$.

\begin{theorem}
\label{thm:sl2}
  The category of finitely presented $K$-finite $A$-representations of $G'$ is an abelian subcategory of
  $\Cc^{\Kfin}_A(G')$.
\end{theorem}
\begin{proof}
  By a theorem of Ihara (Serre \cite{MR0476875} Chapter~II Corollary~1) we know that
  $G' = K'_1 *_{I'} K'_2$.  The theorem follows from Theorem~\ref{thm:rep-coherent}.
\end{proof}

\begin{corollary}
\label{cor:gl2}
  The category of finitely presented, $K$-finite, (locally) $Z$-finite $A$-representations of $G$ is an abelian subcategory of $\Cc^{\Kfin}_A(G)$.
\end{corollary}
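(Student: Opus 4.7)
The plan is to deduce the corollary by specializing the preceding theorem to the case $D = F$, viewed as a one-dimensional division algebra over itself. With this choice, $\Oc_D = \Oc_F$, a uniformiser $\pi$ of $F$ serves as the uniformiser $\pi$ of $D$, and $G = GL_2(D)$ is literally $GL_2(F)$. So the statement of the preceding theorem for $G = GL_2(D)$ is exactly the statement of Corollary~\ref{cor:gl2}, and no further work is required.

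For reassurance, I would quickly check that the ingredients invoked in the previous theorem remain valid in the split case $D = F$: the group $GL_2(F)$ is $p$-adic analytic because $F/\QQ_p$ is finite so $GL_2(\Oc_F)$ is a compact $p$-adic Lie group; the subgroup $G^0 = \{g \in GL_2(F) : \det g \in \Oc_F^\times\}$ together with the centre $Z$ has finite index in $GL_2(F)$ (since $F^\times/(\Oc_F^\times \cdot (F^\times)^2)$ has order dividing $2$); and Serre's theorem \cite{MR0476875} Chapter~II Theorem~3 gives the amalgamated product decomposition $G^0 = K_1 *_I K_2$ with $K_1 = GL_2(\Oc_F)$, $K_2 = \alpha K_1 \alpha^{-1}$ for $\alpha = \twomat{1}{0}{0}{\pi}$, and $I = K_1 \cap K_2$. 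Thus there is no genuine obstacle: all the real work sits in Theorem~\ref{thm:rep-coherent} (and hence in Proposition~\ref{prop:aberg}, \r{A}berg's coherence result, together with Lemma~\ref{lem:fp} relating finite presentation of smooth representations to finite presentation over $\AH$), and the corollary is a direct specialization.
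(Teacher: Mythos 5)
Your proposal is correct and matches the paper exactly: the paper introduces Corollary~\ref{cor:gl2} with the words ``In particular we have:'' following the theorem for $GL_2(D)$, so it too is a direct specialization to $D = F$. Your sanity checks on the split case are reasonable but not strictly necessary, as nothing in the general theorem breaks when $D$ is commutative.
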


\begin{proof}
  Let $G^0$ be the subgroup of $G$ of elements whose reduced norm is in $\Oc_F^\times$ and let $Z$ be the centre of $G$.
  Then $ZG^0$ has finite index in $G$, $Z \cap G^0$ is compact, and $Z/Z\cap K$ is finitely generated for any compact
  open subgroup $K$ of $G$.  Let $f: V_1 \rarrow V_2$ be a map of $K$-finite $Z$-finite finitely presented representations
  of $G$.  By Proposition~\ref{prop:Z-fp} they are finitely presented representations of $G^0$.  By \cite{MR0476875}
  Chapter~II Theorem~3, $G^0 = K_1 *_I K_2$, and so Theorem~\ref{thm:rep-coherent} the kernel $\ker(f)$ is finitely
  presented as a representation of $G^0$.  By Proposition~\ref{prop:Z-fp} again, it is a finitely presented
  representation of $G$.
\end{proof}
\bibliography{references.bib}{}
\bibliographystyle{amsalpha}
\end{document}